\documentclass[11pt]{amsart}

\usepackage{amsmath}
\usepackage{amssymb}
\usepackage{graphicx}

\newtheorem{theorem}{Theorem}[section]
\newtheorem{lem}[theorem]{Lemma}
\newtheorem{cor}[theorem]{Corollary}

\theoremstyle{definition}
\newtheorem{definition}[theorem]{Definition}
\newtheorem{example}[theorem]{Example}

\theoremstyle{remark}

\numberwithin{equation}{section}

%    Absolute value notation

%Doppelstrich-Backstage

%rechristens Backstage

%calligraphists Backstage

%Klammerausdr\"ucke
%\newcommand{\Lr}{\mbox{$\left($}}
%\newcommand{\Rr)}{\mbox{$\right)$}}
%\newcommand{\le[}{\mbox{$\left[$}}
%\newcommand{\ri[}{\mbox{$\right]$}}
%\newcommand{\le\{}{\mbox{$\left\{$}}
%\newcommand{\rK}}{\mbox{$\right\}$}}

%mathematische Symbole und Operatoren

%\newcommand{\cor}{\mbox{cor}\;}

%Formatierungshilfen

%    Blank box placeholder for figures (to avoid requiring any
%    particular graphics capabilities for printing this document).

\def\d{\delta}

\def\V{\Vert}

\def\d{\displaystyle}

\keywords{measurable function spaces, fixed point,  affine Lipschitzian mappings, equivalent norms, $\mu$-a.e. convergence, convergence in measure}
\subjclass[2010]{47H09, 47H10}
\begin{document}

\title[  Fixed points for affine mappings ]{ Koml\'os' Theorem and the Fixed Point  Property for affine  mappings}
\author{ T. Dom\'{\i}nguez Benavides, M. A, Jap\'on}

\address{Tom\'as Dom\'{\i}nguez Benavides\hfill\break
Departamento de  An\'{a}lisis Matem\'{a}tico, Universidad de
Sevilla, P:O: Box 1160, 41080-Sevilla, Spain}
 \email{tomasd@us.es}

 \address{M. Jap\'on \hfill\break
Departamento de  An\'{a}lisis Matem\'{a}tico, Universidad de
Sevilla, P:O: Box 1160, 41080-Sevilla, Spain}
 \email{japon@us.es}

\thanks{The   authors are partially supported by MINECO,
Grant MTM2015-65242-C2-1-P and Andalusian Regional Government Grant
FQM-127. }

\begin{abstract}

Assume that $X$ is a Banach space of measurable functions for which Koml\'os' Theorem holds. We associate to any  closed convex bounded subset $C$ of $X$   a coefficient $t(C)$ which attains its minimum value when $C$ is closed for the topology of  convergence in measure and   we prove some fixed point results for affine  Lipschitzian  mappings, depending on the value of $t(C)\in [1,2]$ and  the value of the Lipschitz constants of the iterates.  As a first consequence, for every $L<2$, we deduce the existence of fixed points for affine uniformly $L$-Lipschitzian mappings defined on the closed unit ball of $L_1[0,1]$. Our main theorem also provides a wide collection of convex closed bounded sets in $L^1([0,1])$ and in  some other spaces of functions,    which satisfy  the fixed point property for affine nonexpansive mappings. Furthermore, this property is still preserved by  equivalent renormings when the Banach-Mazur distance is small enough. In particular, we prove that the failure of the fixed point property for affine nonexpansive mappings in $L_1(\mu)$  can only occur in the extremal case $t(C)=2$. Examples are displayed proving that our fixed point theorem is optimal in terms of the Lipschitz constants and the coefficient $t(C)$.

\end{abstract}

\maketitle
\section{Introduction}

  Fixed point theory for nonexpansive mappings and Lipschitzian mappings has been widely developed in the last 40 years. In many Banach spaces, it is well known that any nonexpansive mappings $T$ defined from a convex closed bounded subset $C$ into $C$ has a fixed point. However, in some other spaces this assertion is false. It can be surprising that,
  as observed in \cite[Chapter 2]{handbook},  most  relevant examples about the failure of the fixed point property for nonexpansive mappings in closed convex bounded sets  involve affine mappings  (see also \cite{Lennardetalonc0, GK}). Of course, this failure cannot occur when $C$ is  weakly compact due to  Tychonov Fixed Point Theorem (and this fact  may well  be the reason for the absence of  examples  of nonexpansive fixed point free mappings defined on a closed convex bounded  subset of  a reflexive space). However, we will show that, in the case of   $L^1$-like spaces and due to Koml\'os' Theorem, the failure of the fixed point property for affine nonexpansive mappings can only occur in a very restricted class of sets

  In Section 1, we introduce some preliminary definitions and notation. In Section 2 we state and prove our main theorem. To do that, we associate to any  closed convex bounded subset $C$ of the space $X$   a coefficient $t(C)\in [1,2]$,  which  is equal to 1 when $C$ is closed for the topology of  convergence in measure and  it can be seen as a measure of the \lq\lq non-closedness'' of the set $C$ for this topology.  We prove some fixed point results for a  class of affine mappings which contains all uniformly $L$-Lipschitzian  mappings, depending on the value of $t(C)$, the value of $L$ and the  Opial modulus of the space with respect to the convergence in measure topology. Since a fixed point theorem for  $L$-Lipschitzian  mappings with $L\geq 1$ immediately yields to the existence of fixed points for nonexpansive mappings, our result can be applied to this class of mappings and we  obtain  stability results for  the fixed point property for affine nonexpansive mappings under renormings.   We show a wide collection of closed convex sets in $L^1([0,1])$ which satisfy the fixed point property for affine uniformly $L$-Lipschitzian  mappings (including its  closed unit ball if $L<2$) and we show some applications of our main theorem in some other spaces as Orlicz spaces or non-commutative $L^1$-spaces. In particular, we prove that the failure of the fixed point property for affine nonexpansive mappings in $L_1(\mu)$  can only occurs in the extremal case $t(C)=2$.

  Since every nonreflexive space contains a closed convex bounded set which fails the fixed point property for affine continuous mappings \cite{milman}, we cannot expect the validity of our results without any Lipschitz restriction. In fact, in Section 3, we   include some examples showing that the value of the constant $L$  in our main theorem is optimal for  all possible values of $t(C)$.

  It is worth noting that, while standard fixed point results for nonexpansive mappings in $L_1(\mu)$   assume compactness for the set  $C$ with respect to  the topology of convergence in measure, due to our affinity assumption,  we do not need any compactness assumption for $C$.

\section{Preliminaries}

Given a Banach space $(X,\V\cdot\V)$ endowed with a linear topology $\tau$, it is said that $X$ has the non-strict Opial condition with respect to (w.r.t.) $\tau$ if for every sequence $(x_n)$ which is $\tau$-convergent to some $x_0\in X$
$$
\liminf_n\V x_n-x_0\V\le\liminf_n\V x_n-x\V
$$
for every $x\in X$. In case that the above inequality is strict for every $x\ne x_0$, it is said that $X$ verifies the Opial condition w.r.t. $\tau$.
Associated to the Opial property the following coefficient is defined for every $c\ge 0$:
$$
r_\tau(c)=\inf\{\liminf_n\V x_n-x\V-1\},
$$
where the infimum is taken over all $x\in X$ with $\V x\V\ge c$ and all $\tau$-null sequences $(x_n)\subset X$ with $\liminf_n\V x_n\V\ge 1$ (see for instance \cite[Chapter 4]{handbook}). The modulus $r_\tau(\cdot)$ is a non-decreasing and continuous function in $[0,+\infty)$ \cite[Theorem 3.5, page 103]{ADL}. It is clear that $r_\tau(c)\ge 0$ for every $c\ge 0$ whenever  $X$ verifies the non-strict Opial condition w.r.t. $\tau$. In case that $r_\tau(c)>0$ for every $c\ge 0$, the Banach space is said to satisfy the uniform Opial condition w.r.t. $\tau$.

\medskip

We  recall Koml\'os' Theorem:

\begin{theorem}\cite{Kom}
Let $\mu$ be a probability measure. For every bounded sequence $(f_n)$ in $L_1(\mu)$, there exists a subsequence $(g_n)$ of $(f_n)$ and a function $g\in L_1(\mu)$ such that for every further subsequence $(h_n)$ of $(g_n)$,
$$
{1\over n}\sum_{i=1}^n h_i\to g \ \mu-a.e.
$$
\end{theorem}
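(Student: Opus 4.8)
The plan is to first establish the result for sequences that are bounded in $L_2(\mu)$, and then to reduce the general $L_1$ case to this one by a truncation and diagonalization argument. The distinctive feature of the statement -- that the Ces\`aro means converge for \emph{every} further subsequence of $(g_n)$ -- forces the subsequence extraction to be carried out with summable, and hence hereditary, quantitative control, and the whole argument must be engineered to preserve this.

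For the bounded case, suppose $\sup_n\|f_n\|_2<\infty$. Since bounded sets in the Hilbert space $L_2(\mu)$ are relatively weakly compact, I would pass to a subsequence with $f_{n_k}\rightharpoonup g$ weakly in $L_2$ and set $h_k=f_{n_k}-g$, so that $h_k\rightharpoonup 0$. Using weak convergence to $0$, I would thin once more so that the inner products satisfy a summable bound, say $|\langle h_{k_i},h_{k_j}\rangle|\le 2^{-j}$ for $i<j$; this makes the family "almost orthogonal". For such a nearly orthogonal bounded family one has $\int|\tfrac1n S_n|^2\,d\mu=O(1/n)$, where $S_n=\sum_{i=1}^n h_{k_i}$, and a standard argument -- Chebyshev plus Borel--Cantelli applied along the squares $n=m^2$, combined with a Rademacher--Menshov / maximal estimate to fill the gaps between consecutive squares -- upgrades convergence in mean to $\mu$-a.e.\ convergence $\tfrac1n S_n\to 0$. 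Crucially, the summable control on the cross terms is inherited by \emph{any} further subsequence, so the a.e.\ convergence of the Ces\`aro means to $g$ is hereditary, which is exactly what the conclusion demands.

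To reduce the $L_1$ case to the above, I would truncate. For each level $k$ write $f_n=f_n^{k}+r_n^{k}$ with $f_n^{k}=f_n\,\chi_{\{|f_n|\le k\}}$ bounded in $L_\infty\subseteq L_2$ and $r_n^{k}=f_n\,\chi_{\{|f_n|>k\}}$ the overflow. Applying the bounded case to $(f_n^{k})_n$ for each $k$ and diagonalizing yields a single subsequence $(g_n)$ along which the truncated Ces\`aro means converge a.e.\ for every $k$; one checks that the resulting limits are Cauchy as $k\to\infty$ and define the target $g\in L_1(\mu)$. The remaining and delicate point is to control the overflow: by thinning further I would arrange that the tail masses $\|r_{g_n}^{k}\|_1$ are summably small along the sequence, so that a Borel--Cantelli / monotonicity argument forces $\limsup_n\tfrac1n\sum_{i=1}^n|r_{g_i}^{k}|$ to be uniformly small and to tend to $0$ as $k\to\infty$, a.e. Combining the two contributions gives $\tfrac1n\sum_{i=1}^n g_i\to g$ a.e., and the hereditary nature of every extraction step transfers the conclusion to all further subsequences.

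The main obstacle is the overflow term. A bounded sequence in $L_1(\mu)$ need not be uniformly integrable (for instance $n\,\chi_{\{[0,1/n]\}}$), so the tails $r_n^{k}$ do not automatically become small and cannot be dismissed by a single truncation; the entire difficulty of Koml\'os' theorem is concentrated in selecting the subsequence so that these non-uniformly-integrable spikes are rare enough for their Ces\`aro averages to disappear a.e., while simultaneously keeping all the quantitative bounds summable, so that the a.e.\ convergence survives the passage to arbitrary further subsequences.
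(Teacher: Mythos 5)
First, a point of reference: the paper does not prove this statement at all --- it is Koml\'os' theorem, quoted with a citation to \cite{Kom} --- so your proposal can only be judged on its own merits, not against an in-paper argument. Your $L_2$ half is essentially correct and is the classical argument: weak compactness, thinning so that $|\langle h_{k_i},h_{k_j}\rangle|\le 2^{-j}$ for $i<j$ (a bound that is indeed inherited by further subsequences, since the $m$-th index of any further subsequence is at least $m$), the estimate $\|\tfrac1n S_n\|_2^2=O(1/n)$, and Chebyshev--Borel--Cantelli along squares with a block estimate in between.

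The genuine gap is the overflow step --- exactly where you say ``the entire difficulty'' lies. You propose to thin the sequence so that the fixed-level tail masses $\|r^{k}_{g_n}\|_1$ become summable; this is impossible in general, and your own example refutes it. For $f_n=n\chi_{[0,1/n]}$ and any fixed level $k$, every term of index $n>k$ satisfies $f_n^k=0$ and $\|r^k_n\|_1=\|f_n\|_1=1$, along \emph{every} subsequence; no thinning can produce summability, so the Borel--Cantelli argument you invoke has nothing to act on. (If summability \emph{were} achievable, your scheme would close, since sub-sums of a convergent nonnegative series converge, making the control hereditary; the failure is precisely that it is not achievable outside the uniformly integrable case.) The standard repairs are of two kinds. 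One is to apply Borel--Cantelli to the \emph{measures of the supports} of the bad parts rather than to their $L_1$-norms: first extract, via a subsequence-splitting (Kadec--Pe\l czy\'nski-type) lemma, a subsequence with $f_n=u_n+v_n$ where $(u_n)$ is uniformly integrable and the $v_n$ are carried by sets $A_n$ with $\sum_n\mu(A_n)<\infty$; then a.e.\ point lies in only finitely many $A_n$, so $\tfrac1n\sum_{i\le n}v_i\to 0$ a.e., and disjointness/summability of supports is inherited by further subsequences. The other, which is Koml\'os' own route, is to truncate the $i$-th function at a level depending on $i$ (as in Kolmogorov's strong law) and control $\sum_i i^{-2}\,\mathrm{Var}(f_i\chi_{\{|f_i|\le i\}})$; note that fixed-level truncation combined with a uniform tail bound $\sup_n\|r^k_n\|_1\le\epsilon_k$ would in any case only control $\liminf_n\tfrac1n\sum_{i\le n}|r^k_i|$ via Fatou, not the $\limsup$ your triangle inequality needs. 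As written, the reduction from $L_1$ to the $L_2$ case does not close, and closing it is the substance of the theorem.
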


 Koml\'os' Theorem has been extended to a broader class of Banach function spaces.

\begin{definition}
We say that a Banach function space $X$ associated to a  $\sigma$-finite measure satisfies the Koml\'os' condition if    for every bounded sequence $(f_n)\subset X$ there exists a subsequence $(g_n)$ of $(f_n)$ and a function $g\in X$ such that for every further subsequence $(h_n)$ of $(g_n)$,
$$
{1\over n}\sum_{i=1}^n h_i\to g \ \mu-a.e.
$$
\end{definition}

It is proved in \cite{len}  that every Banach function space $X$ over  a $\sigma$-finite complete measure space $(\Omega,\Sigma,\mu)$, such that $X$ is weakly finitely integrable and has the Fatou property  (see definitions in \cite{len}) satisfies  Koml\'os' condition.   Among this class,  $L_p(\mu)$ ($1\le p\le +\infty$), Lorentz, Orlicz and Orlicz-Lorentz spaces are included.

\vskip .3cm

\section{Main result}

We start this section introducing  the following geometric coefficient which will be essential in the proof of our main theorem.

\begin{definition} Let $X$ be a Banach space endowed with a topology $\tau$. Let $C$ be a norm-closed convex bounded subset of $X$ which contains some $\tau$-convergent sequences. We define
$$
t(C)=\inf\left\{ \lambda\ge 0 : \d{\inf_{c\in C} }\limsup_n\V c- x_n\V\le \lambda \limsup_n\V x- x_n\V\ \right\}
$$
where  $(x_n)$ and $x$ run over all sequences  $(x_n)\subset C$  with $\tau-\lim x_n=x\in X$.  \end{definition}

 Note that  $t(C)\ge 1$ whenever
 $X$ verifies the non-strict Opial condition and $t(C)\leq 2$ for any case.
If $X$ is a Banach space satisfying the uniform Opial condition w.r.t. $\tau$,  it is not difficult to check that $t(C)=1$ if and only if $C$ is $\tau$-closed.
Since $t(C)$ attains its minimum value when $C$ is $\tau$-closed, the coefficient  $t(C)$ can be understood as a measure of the\lq\lq non-closedness'' of $C$ for the topology $\tau$. If we denote  by $H(C,\overline{C}^\tau)$   the Hausdorff distance between a set $C$ and its $\tau$-closure, the previous idea
 can be illustrated with the following  lemma:

  \begin{lem} \label{lema}
Let  $C$ be a norm-closed convex bounded subset of $L_1([0,1])$ and $\tau$ the  topology  of the convergence  in measure.
 Then  $H(C,\overline{C}^\tau)\leq diam (C)/2.$ The extremal case, $H(C,\overline{C}^\tau)= diam (C)/2$, implies that $t(C)=2$.
\end{lem}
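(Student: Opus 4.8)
The plan is to pass to the level of sequences, extract from the structure of $L_1$ an exact asymptotic additivity of the norm under convergence in measure, and then read off both assertions from two short estimates. Since $C\subseteq\overline{C}^\tau$ we have $\dist(c,\overline{C}^\tau)=0$ for every $c\in C$, so the Hausdorff distance reduces to a one--sided supremum,
\[
H(C,\overline{C}^\tau)=\sup_{y\in\overline{C}^\tau}\dist(y,C).
\]
Fix $y\in\overline{C}^\tau$ and a sequence $(x_n)\subset C$ with $x_n\to y$ in measure; working on a finite measure space I may pass to a subsequence with $x_n\to y$ $\mu$--a.e.\ and $\V x_n-y\V\to d$ for some $d\ge 0$. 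The key step is the identity
\begin{equation}
\lim_n\V x_n-c\V=\V y-c\V+d\qquad\text{for all }c\in L_1([0,1]). \tag{$\star$}
\end{equation}
I would prove $(\star)$ via the subsequence splitting lemma: after a further subsequence there are pairwise disjoint sets $B_n$, necessarily with $|B_n|\to 0$, such that $u_n:=(x_n-y)\mathbf 1_{B_n^c}$ is uniformly integrable. Then $u_n\to 0$ in measure while uniformly integrable, so $\V u_n\V\to 0$ by Vitali's theorem, whence $\V v_n\V\to d$ for $v_n:=(x_n-y)\mathbf 1_{B_n}$. Splitting $\int=\int_{B_n}+\int_{B_n^c}$ and using $\int_{B_n}|y-c|\to 0$ (absolute continuity of the integral) together with $\V u_n\V\to 0$ and the triangle inequality on each piece yields $(\star)$.

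The first assertion then follows from two bounds. On the one hand $\dist(y,C)\le\V y-x_n\V$ for every $n$, so $\dist(y,C)\le d$. On the other hand, $(\star)$ with $c=x_m\in C$ gives $\lim_n\V x_n-x_m\V=\V y-x_m\V+d\le\mbox{diam}(C)$, hence $\dist(y,C)\le\V y-x_m\V\le\mbox{diam}(C)-d$. Combining,
\[
\dist(y,C)\le\min\{d,\ \mbox{diam}(C)-d\}\le\frac{\mbox{diam}(C)}{2},
\]
and taking the supremum over $y\in\overline{C}^\tau$ gives $H(C,\overline{C}^\tau)\le\mbox{diam}(C)/2$.

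For the extremal case I assume $\mbox{diam}(C)>0$ and $H(C,\overline{C}^\tau)=\mbox{diam}(C)/2$, and choose $y_k\in\overline{C}^\tau$ with $\dist(y_k,C)\to\mbox{diam}(C)/2$, with associated sequences $(x_n^k)_n\subset C$ and residuals $d_k$ as above. The two inequalities just obtained read $\dist(y_k,C)\le d_k\le\mbox{diam}(C)-\dist(y_k,C)$, so the squeeze theorem forces $d_k\to\mbox{diam}(C)/2$ (in particular $d_k>0$ for large $k$). Since each $(x_n^k)_n$ is an admissible sequence in the definition of $t(C)$, and $(\star)$ evaluates its defining quantities as $\limsup_n\V y_k-x_n^k\V=d_k$ and $\inf_{c\in C}\limsup_n\V c-x_n^k\V=\inf_{c\in C}(\V y_k-c\V+d_k)=d_k+\dist(y_k,C)$, every admissible $\lambda$ must satisfy $\lambda\ge 1+\dist(y_k,C)/d_k\to 2$. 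Thus $t(C)\ge 2$, and as $t(C)\le 2$ in all cases, $t(C)=2$.

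The one genuinely delicate ingredient is the additivity identity $(\star)$, which is exactly where the geometry of $L_1$ enters (equivalently, its uniform Opial property for convergence in measure, corresponding to $r_\tau(c)=c$); the disjointification furnished by the subsequence splitting lemma is what converts $\mu$--a.e.\ convergence into the clean additive formula, after which both assertions reduce to bookkeeping with the triangle inequality and the squeeze theorem.
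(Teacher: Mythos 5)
Your proof is correct and follows essentially the same route as the paper: both arguments hinge on the exact asymptotic additivity $\lim_n\Vert x_n-c\Vert=\Vert y-c\Vert+\lim_n\Vert x_n-y\Vert$ for sequences converging in measure (the paper's equality $(\ast)$), from which the bound $H(C,\overline{C}^\tau)\le \mathrm{diam}(C)/2$ and the conclusion $t(C)=2$ in the extremal case follow by the same triangle-inequality bookkeeping. The only difference is that you prove this identity from scratch via the subsequence splitting lemma and Vitali's theorem, whereas the paper simply quotes it as well known.
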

\begin{proof}
It is well known that for every $z\in L_1([0,1])$ and every  sequence  $\tau$-null sequence $(x_n)$  we have
$$
\limsup_n\V x_n+z\V=\limsup_n\V x_n\V+\V z\V.\qquad \qquad (\ast)
$$
If $(x_n)\subset C$ with $\tau-\lim_n x_n=x$,
  the above equality implies
     \begin{eqnarray*}diam (C) &\geq& \limsup_m\limsup_n\Vert (x_n-x)+(x-x_m)\Vert\\&=&\limsup_n\Vert x_n-x\Vert+\limsup_m\Vert x-x_m\Vert =2 \limsup_n\Vert x_n-x\Vert,\end{eqnarray*}
 and consequently $\limsup_n\Vert x_n-x\Vert \leq diam (C)/2$.
The definition of the Hausdorff distance now implies   $H(C,\overline{C}^\tau)\leq diam (C)/2.$

\medskip

Assume $H(C,\overline{C}^\tau)= diam (C)/2$.
For every $\epsilon>0$ there exists $x\in \overline{C}^\tau$ such that $d(x,C)\geq diam(C)/2-\epsilon$. Since the topology $\tau$ is metrizable, we can choose a sequence $(x_n)$ in $C$ which is  $\tau$-convergent to $x$.  For every $c\in C$ we have
\begin{eqnarray*} \limsup_n \Vert x_n-c\Vert &=&\limsup_n \Vert x_n-x\Vert +\Vert x-c\Vert \\ &\geq& \limsup_n \Vert x_n-x\Vert + diam (C)/2-\epsilon\\&\geq & 2\limsup \Vert x_n-x\Vert -\epsilon,\end{eqnarray*}
which implies that $t(C)=2$.
\end{proof}

%\medskip

%\textbf{ Claim 3}. The condition $t(C)=2$ implies that the extremal case occurs in the following sense:  for every $\epsilon >0$ there exists a convex closed  subset $D$ of $C$ such %that $H(D,\overline{D}^\tau)\geq diam (D)/(2+\epsilon)$.

%\medskip

%Proof. First of all, we are going to check that $t(C)=1+a(C)$. Indeed, the equality
%$$\limsup_n\Vert x_n-c\Vert =\limsup_n \Vert x_n-x\Vert +\Vert x-c\Vert $$
%for every sequence $\{x_n\}$ $\tau$-convergent to $x$ implies that
%$$\inf_{x\in C} \limsup_n \Vert x_n-x\Vert 0\limsup_n \Vert x_n-x\Vert +d(x,C)$$

%and this equality easily implies $t(C)=1+a(C)$,

%Assume that $a(C)=1$. For $\delta>0$ there is sequence $\{x_n\}$ in $C$ $\tau$-convergent to $x$ such that $d(c,C)\geq (1-\delta)\limsup_n \Vert x_n-x\Vert$. Taking a subsequence we %can assume that the limit
%$\lim_{n,m, \, n\not= m} \Vert x_n-x_m\Vert := a$ does exist and $\Vert x_n-x_m\Vert \leq a(1+\delta)$ for every $n,m\in \mathbb{N}$. Denote $D=\overline{co} (\{x_n:n\in %\mathbb{N}\})$. We have $diam (D) \leq a(1+\delta)$. Furthermore,
%$$a=\limsup_n\limsup_m \Vert x_n-x_m\Vert =2\limsup_n \Vert x_n-x\Vert .$$
%Thus, $$diam (D)\leq a(1+\delta) \leq 2(1+\delta) \limsup_n\Vert x_n-x\Vert $$ $$ \leq 2\frac{1+\delta}{1-\delta} d(x,C)\leq 2\frac{1+\delta}{1-\delta} d(x,D).$$

\vskip 0.3cm

Throughout this section, we consider that  $X$ is a function Banach space over a finite or $\sigma$-finite measure space with the Koml\'os' condition. In case that the measure is finite we can endow $X$ with the topology of the convergence in measure. Convergence in measure for a finite measure  is related  to a.e. convergence as follows: For $(f_n)$  a sequence converging to $f$ in measure, there is a subsequence $(f_{n_k})$ which converges to $f$ almost everywhere. Conversely, if $(f_n)$ tends to $f$ a.e. then $(f_n)$ converges to $f$  in  measure \cite[pages 156-158]{HS}.  The same holds for a $\sigma$-finite measure  and the local convergence in measure.
Due to the Koml\'os' condition, every norm-closed convex bounded subset of $X$ contains a sequence which is convergent in measure or locally in measure. In fact, due to these facts, we can equivalently define the coefficient $t(C)$ by
$$
t(C)=\inf\{\lambda\ge 0: \inf_{c\in C} \limsup_n\V c- x_n\V\le \lambda \limsup_n\V x- x_n\V:\ x_n\to x\  \mu-{\rm a.e.}\ \}
$$
and for the Opial modulus $r_\tau(\cdot)$ we can replace the $\tau$-convergence by convergence $\mu$-a.e.

\medskip

If $T:C\to C$  is a Lipschitzian mapping,  we denote by $|T|$ its exact Lipschitz constant, that is
$$
|T|=\sup\left\{ {\V Tx-Ty\V\over \V x-y\V}: \ x,y\in C, \ x\ne y\ \right\}
$$
and  we define
$$
S(T)=\liminf_n {|T|+\cdots+|T^n|\over n}.
$$
A sequence $(x_n)\subset C$ is an approximate fixed point sequence (a.f.p.s.)  for $T$ whenever
$\lim_n\V x_n-Tx_n\V=0$ and
it is clear that this property is inherited by all its subsequences. The main result of the article is the following:

\begin{theorem}\label{main} Let $X$ be a function Banach space with the Koml\'os' condition  and satisfying the non-strict Opial property. Let $T:C\to C$ be an affine Lipschitzian mapping.
If
$$
\displaystyle{ S(T)<{1+r_X(1)\over t(C)}}
$$
then $T$ has a fixed point.
\end{theorem}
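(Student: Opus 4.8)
The plan is to manufacture an approximate fixed point sequence that converges $\mu$-a.e., attach to it an asymptotic radius functional, and then run a geometric descent on that radius in which the averaged iteration contributes a factor $S(T)$, the coefficient $t(C)$ contributes a factor $t(C)$, and the Opial modulus contributes the gain $1+r_X(1)$. First I would build an a.f.p.s. purely from affinity: fixing $x\in C$ and setting $c_n=\frac1n\sum_{k=0}^{n-1}T^kx\in C$, affinity gives $Tc_n-c_n=\frac1n(T^nx-x)$, so $\V Tc_n-c_n\V\le\frac1n\,\mathrm{diam}(C)\to0$. Applying the Koml\'os condition, and using that Ces\`aro means of (a subsequence of) an a.f.p.s. are again an a.f.p.s. (again by affinity, $\V T(\frac1m\sum_i x_i)-\frac1m\sum_i x_i\V\le\frac1m\sum_i\V Tx_i-x_i\V\to0$), I may assume my a.f.p.s. $(x_n)$ converges $\mu$-a.e. to some $z$, which lies in $\overline{C}^\tau$ but in general not in $C$ — this is precisely the source of the factor $t(C)$. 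After a further subsequence I assume $\V x_n-z\V\to R$.

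Next I introduce $\phi(y)=\limsup_n\V x_n-y\V$, a convex $1$-Lipschitz functional. The a.f.p.s. property and $\V x_n-T^kx_n\V\to0$ give $\phi(T^kc)\le|T^k|\phi(c)$, and convexity of $\phi$ then yields, for $w_N=\frac1N\sum_{k=1}^N T^kc\in C$, the bound $\phi(w_N)\le\big(\frac1N\sum_{k=1}^N|T^k|\big)\phi(c)$, whose $\liminf$ over $N$ involves $S(T)$. On the other side, since $x_n\to z$ $\mu$-a.e., the scaled Opial modulus gives the lower estimate $\phi(y)\ge\liminf_n\V x_n-y\V\ge R\big(1+r_X(\V y-z\V/R)\big)$; in particular $z$ minimizes $\phi$ with $\phi(z)=R$.

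Now I combine the two sides. The definition of $t(C)$ applied to $(x_n)\to z$ produces a near-minimizer $c_0\in C$ with $\phi(c_0)\le t(C)R+\varepsilon$, and feeding $c_0$ into the averaging bound gives, for suitable large $N$, a point $w_N\in C$ with $\phi(w_N)\le(S(T)+\varepsilon)(t(C)R+\varepsilon)$. The hard part is the reverse step: converting this upper bound on the \emph{radius} $\phi(w_N)$ into an upper bound on the \emph{distance} $\V w_N-z\V$. Comparing it with the Opial lower bound $\phi(w_N)\ge R\big(1+r_X(\V w_N-z\V/R)\big)$ gives $r_X(\V w_N-z\V/R)\le S(T)\,t(C)-1$, and since the hypothesis reads $S(T)\,t(C)<1+r_X(1)$ while $r_X$ is nondecreasing and continuous, this forces $\V w_N-z\V\le\theta R$ for a fixed $\theta<1$. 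Building a new a.f.p.s. from the orbit averages of this $w_N$, applying Koml\'os to get an a.e.-convergent a.f.p.s. $(\tilde x_M)\to z'$, and using non-strict Opial for the new sequence ($\liminf_M\V\tilde x_M-z'\V\le\liminf_M\V\tilde x_M-z\V\le\theta R$) delivers a new a.e.-convergent a.f.p.s. of asymptotic radius $\le\theta R+\varepsilon$. Iterating drives the asymptotic radius to $0$, and the endgame is the routine extraction of a norm-convergent a.f.p.s. whose limit $p$ lies in the norm-closed set $C$ and is fixed because $\V p-Tp\V\le(1+|T|)\V x_n-p\V+o(1)$.

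I expect two steps to carry the real weight. The first is the reverse-Opial inversion that turns the scalar inequality $r_X(\cdot)\le S(T)\,t(C)-1<r_X(1)$ into a \emph{uniform} contraction factor $\theta<1$: this is exactly where the threshold $1+r_X(1)$ is consumed, and it must be done without assuming the uniform Opial property (when $r_X(1)=0$ the same comparison instead contradicts the minimality $\phi(z)=R$, forcing $R=0$ at once). The second is organizing the descent so that the successive a.e. limits and near-minimizers assemble into a single genuinely norm-convergent sequence inside $C$, while keeping the $\limsup$/$\liminf$ bookkeeping consistent along the nested subsequences produced by repeated use of Koml\'os.
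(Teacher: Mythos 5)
Your first half is sound and matches the paper's machinery: Ces\`aro means of the orbit give an a.f.p.s.\ by affinity, Koml\'os yields an a.e.-convergent a.f.p.s.\ $(x_n)\to z$ with radius $R$, the functional $\phi(y)=\limsup_n\Vert x_n-y\Vert$ satisfies the averaging bound through $S(T)$, the coefficient $t(C)$ produces $c_0\in C$ with $\phi(c_0)\le t(C)R+\varepsilon$, and the reverse-Opial inversion (using that $r_X$ is nondecreasing and continuous) legitimately yields a uniform $\theta<1$ with $\Vert w_N-z\Vert\le\theta R$; your treatment of the degenerate case $r_X(1)=0$ is also correct. The genuine gap is the descent step. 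You claim $\liminf_M\Vert\tilde x_M-z\Vert\le\theta R$ for the new a.e.-convergent a.f.p.s.\ $(\tilde x_M)\to z'$ obtained by applying Koml\'os to the orbit averages of $w_N$. Nothing supports this inequality: the estimate $\Vert w_N-z\Vert\le\theta R$ controls only the \emph{starting point} of the new orbit, whereas $T$ is merely Lipschitzian, $z$ in general lies outside $C$ and is not a fixed point of anything, so the iterates $T^kw_N$, their Ces\`aro means, and hence $(\tilde x_M)$ may drift anywhere inside $C$. The only bound one can actually prove is of the form $\Vert\tilde x_M-z\Vert\le\phi(\tilde x_M)+R$, and $\phi(\tilde x_M)$ is controlled only by $S(T)\phi(w_N)\approx S(T)^2t(C)R$, which is not a contraction. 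Hence your radii $R,R',R'',\dots$ need not decrease, and the iteration never gets off the ground; the endgame (Cauchy limits, fixed point in $C$) collapses with it.

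The paper is organized precisely to avoid comparing a \emph{new} a.f.p.s.\ with an \emph{old} limit point. Its descent quantity is attached to points of $C$: $r(y)=\inf\{\limsup_n\Vert y-x_n\Vert\}$ over all a.e.-convergent a.f.p.s. Given $x_0$, it chooses $(x_n)\to x$ nearly realizing $r(x_0)$, and builds the new a.f.p.s.\ $(\bar z_p)\to z$ from the orbit averages of $x_0$ itself, so that the \emph{double} limsup $\limsup_p\limsup_n\Vert\bar z_p-x_n\Vert\le S(T)r(x_0)(1+\epsilon)$ keeps the two a.f.p.s.\ close to each other. Then the Opial modulus is applied twice (once to $(\bar z_p-z)$ against $z-x$, once to $(x_n-x)$ against $\bar z_p-x$) to get a dichotomy: either $\limsup_n\Vert x_n-x\Vert\le\rho$ or $\limsup_p\Vert\bar z_p-z\Vert\le\rho$, where $\rho=\frac{(1-\epsilon)r(x_0)}{t(C)(1+\epsilon)}$. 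Finally $t(C)$ is applied to whichever a.f.p.s.\ has small radius, producing $w(x_0)\in C$ with $r(w(x_0))\le(1-\epsilon)r(x_0)$ and $\Vert x_0-w(x_0)\Vert\le[2+(1+\epsilon)S(T)]r(x_0)$; iterating gives a Cauchy sequence whose limit $a\in C$ satisfies $r(a)=0$, hence $Ta=a$. Your single-Opial estimate $\Vert w_N-z\Vert\le\theta R$ is a true and natural fact, but to finish you need some device of this kind that converts it into a decreasing quantity attached to points of $C$; as written, that idea is missing.
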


\begin{proof}
 Since $T$ is affine,  it can be checked that for every $x\in C$ the sequence
$$
x_n:={Tx+T^2x+\cdots +T^nx\over n}
$$
is an a.f.p.s. and   the sequence of the arithmetic means of an a.f.p.s. is an a.f.p.s. as well. Applying Koml\'os'  condition, there exists a subsequence of $(x_n)$  and $x\in X$,  such that for all further subsequences,  the  sequence   of   successive
arithmetic means  converges  to $x$ $\mu$-a.e. Consequently, we can always assume that the  set
$$
\mathcal{D}(C)=\{\{(x_n),x\}:\ (x_n) \mbox { is an a.f.p.s. in $C$ and } \ \lim_n x_n=x \ \mu-{\rm a.e.}  \}
$$
is nonempty.

For every $y\in C$ we define
$$
r(y)=\inf\{\limsup_n\V y-x_n\V: \{(x_n),x\}\in\mathcal{D}(C)\  \}.
$$

We first prove that $T(y)=y$ whenever $r(y)=0$.
Indeed, let $\epsilon>0$ and take $\{(x_n),x\}\in\mathcal{D}(C)$ with
$
\limsup_n \V y-x_n\V\le \epsilon.
$
Then
$$
\begin{array}{lll}
\V Ty-y\V & \le & \limsup_n\V Ty-Tx_n\V + \limsup_n\V Tx_n-x_n\V+\limsup_n\V x_n-y\V\\
          &\le & |T|\limsup_n\V y-x_n\V +\limsup_n\V x_n-y\V\le (|T|+1)\epsilon
          \end{array}
          $$
           and $T(y)=y$ since $\epsilon$ is arbitrary. Thus, our target will be to find some $y\in C$ with $r(y)=0$.

 To do that,  choose $\epsilon>0$ such that
 $$
 S(T)<{1+r_X(1)\over t(C)}{1-\epsilon\over (1+\epsilon)^2}.
 $$ and   an arbitrary
 $x_0\in C$.
 If $r(x_0)>0$, take $\{(x_n),x\}\in\mathcal{D}(C)$ with
 $$
 \limsup_n\V x_0-x_n\V <r(x_0)(1+\epsilon).
 $$

We denote by $\phi_{(x_n)}(\cdot)$ the convex function
 $$
 \phi_{(x_n)}(y)=\limsup_n\V y-x_n\V,\qquad y\in X.
 $$

It can be easily checked that  $\lim_n \V T^s x_n-x_n\V=0$ for every $s\in \mathbb{N}$. Hence
 $$
\begin{array}{lll}
 \phi_{(x_n)}(T^s x_0)& = &\limsup_n\V T^s x_0-x_n\V=\limsup_n \V T^sx_0-T^sx_n\V\\
                      & \le &  |T^s| \limsup_n \V x_0-x_n\V = |T^s|\phi_{(x_n)}(x_0) .
                      \end{array}$$

Therefore, if we define the sequence
 $$
 z_s:={Tx_0+\cdots+T^s x_0\over s}
 $$
we know that $(z_s)$ is an a.f.p.s and for every $s\in\mathbb{N}$
$$
\displaystyle{\phi_{(x_n)}(z_s)\le {|T|+|T^2|+\cdots +|T^s|\over s} \ \phi_{(x_n)}(x_0)}.
$$
Taking limits
$$
\liminf_s \phi_{(x_n)}(z_s)\le S(T)\  \phi_{(x_n)}(x_0).
$$

Applying Koml\'os'  condition, we can find a subsequence $(z_{s_i})$  and $z\in X$,  such that $\lim_i\phi_{(x_n)}(z_{s_i})=\liminf_s \phi_{(x_n)}(z_s)$ and
$$
{\bar z}_p={z_{s_1}+\cdots + z_{s_p}\over p}
$$
converges  to $z$ $\mu$-a.e. Moreover, by convexity and taking limits  we have
 $$
\begin{array}{lll}\d{ \limsup_p \phi_{(x_n)}({\bar z}_p)}& \le & \d{\limsup_p{ \phi_{(x_n)}(z_{s_1})+\cdots+ \phi_{(x_n)}(z_{s_p})\over p}}\\
    &= &\d{ \lim_i\phi_{(x_n)}(z_{s_i})}\le S(T)\  \phi_{(x_n)}(x_0).\\
    \end{array}
 $$
 Consequently, we have obtained an a.f.p.s. $({\bar z}_p)$, convergent  to some $z\in X$ $\mu$-a.e. and with
  $$
 \limsup_p\limsup_n\V \bar{z}_p-x_n\V < S(T)\  r(x_0)(1+\epsilon).
 $$

Define
$$
\rho:= {r(x_0)(1-\epsilon)\over   t(C)(1+\epsilon)}.
$$
We claim that
$$
\min\{ \limsup_n\V x-x_n\V, \limsup_p\V z-{\bar z}_p\V\} \le \rho.
$$
Indeed, otherwise, using the Opial modulus
$$
\limsup_p\left\V{{\bar z}_p-x\over \rho}\right\V = \limsup_p\left\V{{\bar z}_p-{ z}+{ z}-x\over \rho}\right\V\ge 1+r_\tau\left({\V {z}-x\V\over \rho}\right)\ge 1
$$
and
$$
\begin{array}{lll}
S(T)\ r(x_0)(1+\epsilon) & > & \d{\limsup_p\limsup_n\V {\bar z}_p-x_n\V }\\
                          &=  & \d{\rho \limsup_p\limsup_n\left\V{{\bar z}_p-x\over \rho}+{x-x_n\over \rho}\right\V}\\
                        &\ge &\d{ \rho \limsup_p \left[1+ r_\tau\left({\V {\bar z}_p-x\V\over \rho}\right)\right]}\\
                        &=   & \d{\rho\left[1+ r_\tau\left({\limsup_p\V {\bar z}_p-x\V\over \rho}\right)\right]}
                         \ge  \rho [1+r_\tau(1)],
                        \end{array}
                        $$
 which is a contradiction with the choice of $\epsilon$ and the definition of $\rho$.
\vskip 0.3cm

According to the definition of the coefficient $t(C)$, there exist some ${\bar x}, {\bar z}\in C$ such that
$$
\limsup_n \V \bar{x}-x_n\V\le t(C)(1+\epsilon) \limsup_n\V x-x_n\V,
$$
$$
\limsup_n \V \bar{z}-{\bar z}_n\V\le t(C)(1+\epsilon) \limsup_n\V z-{\bar z}_n\V.
$$

\vskip 0.3cm

If $ \limsup_n\V x_n-x\V\le \rho$ then
 $$
  \limsup_n\V x_n-{\bar x}\V\le r(x_0)(1-\epsilon)
  $$
 which implies $r(\bar{x})\le r(x_0)(1-\epsilon)$. Note
 $$
 \begin{array}{lll}
 \V \bar{x}-x_0\V &\le & \limsup_n\V x_n-x_0\V+\limsup_n\V x_n-{\bar x}\V\le r(x_0)(1+\epsilon)+r(x_0)(1-\epsilon)\\
                 & = & 2r(x_0).
                 \end{array}
 $$
On the other hand, if   $ \limsup_p\V {\bar z}_p-z\V\le \rho$, then
 $$
  \limsup_p\V {\bar z}_p-{\bar z}\V\le r(x_0)(1-\epsilon)
  $$
 which  implies  $r({\bar z})\le r(x_0)(1-\epsilon)$. In this case,
 $$
\begin{array}{lll}
 \V \bar{z}-x_0\V&\le& \limsup_n\V x_0-x_n\V+\limsup_p\V \bar{z}-\bar {z}_p\V\\
                 & +&\limsup_p\limsup_n\V {\bar z}_p-x_n\V\\
           & \le & (1+\epsilon)r(x_0)+ r(x_0)(1-\epsilon)+ (1+\epsilon)S(T) r(x_0)\\
             & = & [2+(1+\epsilon)S(T)]r(x_0).
           \end{array}
 $$
 In every case, we have obtained some $w(x_0)\in C$ with
 $$
 r(w(x_0))\le r(x_0)(1-\epsilon)\ {\rm and }\ \ \V x_0-w(x_0)\V\le [2+(1+\epsilon)S(T)]r(x_0)
 $$
Now we construct the sequence $a_1=x_0$, $a_{n+1}=w(a_n)$ for $n\geq1$. Since $(a_n)$ is a Cauchy sequence, there exists $a=\lim_na_n$ such that $r(a)=0$ and $T$ has a fixed point.

\end{proof}

Equality $(\ast)$ used  in the proof of Lemma \ref{lema}  for $L_1([0,1])$ also holds for $L_1(\mu)$ for $\mu$ either a finite or $\sigma$-finite measure and implies that $1+r_{\tau}(1)=2$ when   $X=L_1(\mu)$. Consequently we can  deduce:

\begin{cor} \label{cor} Let $C$ be a norm-closed convex bounded subset of $L_1(\mu)$ and $T:C\to C$ be an affine Lipschitzian mapping. Then $T$ has a fixed point whenever
$$
S(T)<{2\over t(C)}.
$$
\end{cor}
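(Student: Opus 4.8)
The plan is to read Corollary \ref{cor} off Theorem \ref{main} by specializing to $X = L_1(\mu)$, equipped with the topology $\tau$ of convergence in measure (local convergence in measure when $\mu$ is only $\sigma$-finite). For this I must check that $L_1(\mu)$ falls under the hypotheses of the theorem and that its Opial modulus satisfies $1 + r_\tau(1) = 2$; once the latter is known, the bound $S(T) < (1 + r_X(1))/t(C)$ of Theorem \ref{main} becomes precisely $S(T) < 2/t(C)$. The Koml\'os' condition for $L_1(\mu)$ needs no new work: for a probability measure it is the original Koml\'os' Theorem, and in general $L_1(\mu) = L_p(\mu)$ with $p = 1$ lies in the class shown in \cite{len} to satisfy the Koml\'os' condition.

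The analytic core is equality $(\ast)$, which I would first sharpen to a $\liminf$ statement. For a $\tau$-null sequence $(y_n)$ and any $z \in L_1(\mu)$ one has, pointwise a.e., $|y_n + z| - |y_n| \to |z|$ with $\bigl| |y_n + z| - |y_n| \bigr| \le |z| \in L_1(\mu)$; along any a.e.-convergent subsequence dominated convergence gives $\V y_n + z\V - \V y_n\V \to \V z\V$, and the subsequence principle relating convergence in measure to a.e. convergence upgrades this to the full limit $\V y_n + z\V - \V y_n\V \to \V z\V$. Hence $\liminf_n \V y_n + z\V = \liminf_n \V y_n\V + \V z\V$. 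Writing a $\tau$-convergent sequence as $x_n = (x_n - x_0) + x_0$ and taking $z = x_0 - x$, this immediately yields the non-strict Opial inequality $\liminf_n \V x_n - x_0\V \le \liminf_n \V x_n - x\V$, so $L_1(\mu)$ satisfies the standing Opial hypothesis.

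It remains to evaluate $r_\tau(1)$. For the lower bound, if $\V x\V \ge 1$ and $(y_n)$ is $\tau$-null with $\liminf_n \V y_n\V \ge 1$, the sharpened identity with $z = -x$ gives $\liminf_n \V y_n - x\V = \liminf_n \V y_n\V + \V x\V \ge 2$, so $r_\tau(1) \ge 1$. For the matching upper bound I would exhibit an extremal configuration: a unit vector $x$ together with a normalized $\tau$-null sequence (for instance $y_n = n\,\mathbf{1}_{[0,1/n]}$ in $L_1[0,1]$, or any normalized sequence with shrinking or escaping support; such sequences exist in every infinite-dimensional $L_1(\mu)$, the finite-dimensional case being immediate from compactness), for which the identity gives $\liminf_n \V y_n - x\V = 2$, whence $r_\tau(1) \le 1$. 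Therefore $1 + r_\tau(1) = 2$, and Theorem \ref{main} applies verbatim to give the fixed point under $S(T) < 2/t(C)$. The one step demanding genuine care is this modulus computation --- specifically the passage from the $\limsup$ form of $(\ast)$ to the $\liminf$ identity that $r_\tau$ requires --- which is exactly what the dominated-convergence argument secures; everything else is bookkeeping.
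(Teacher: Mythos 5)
Your proposal is correct and follows essentially the same route as the paper: the paper's proof of this corollary consists precisely in observing that equality $(\ast)$ from Lemma \ref{lema} extends to $L_1(\mu)$ and forces $1+r_\tau(1)=2$, whence Theorem \ref{main} applies. Your dominated-convergence derivation of the $\liminf$ form of $(\ast)$ simply fills in a detail the paper leaves implicit (and your upper bound $r_\tau(1)\le 1$ is not even needed, since the corollary only requires $1+r_\tau(1)\ge 2$).
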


In particular, if we consider any convex bounded subset  of $L_1(\mu)$ which is  $\tau$-closed, we can derive the existence of a fixed point for every affine Lipschtizian mapping with $S(T)<2$. An example of such a set is  $B_{L_1(\mu)}$, the closed unit ball of $L_1(\mu)$. Furthermore, the failure of the fixed point property for affine nonexpansive mappings in closed convex subsets of $L_1([0,1])$ (or $\ell_1$  which is isometrically embedded in $L_1([0,1])$) can only occur when $t(C)$ attains its maximum value 2.
Note that the affinity condition can not be dropped  because there exist some nonexpansive mappings  from $B_{L([0,1])}$ into itself without fixed points.
In fact,
 for every norm-closed convex bounded set $C$ of $L_1([0,1])$ containing a closed interval, there is  a fixed point free nonexpansive mapping $T:C\to C$ \cite{DLT} (see also \cite[Chapter 2]{handbook} and \cite{Sine}).

\medskip

We next show  further examples of norm-closed convex bounded subsets of $L^1([0,1])$   where Theorem \ref{main}  can be applied:

\begin{example} \label{EX1} We use the following notation:
$$
C:=\{f\in L_1([0,1]):\ f\ge 0,\ \int_0^1 f(t)dt=1\ \},
$$
$$
C_a:={\rm co}(C\cup\{a\}),\qquad a\in [0,1].
$$
All these sets are norm-closed, convex and bounded. They are not weakly compact because
they contain the sequence $\{ n\chi_{[0,1/n]}\}$ which has no weakly convergent
subsequence. Furthermore, no one is contained in a compact set for the topology of the convergence in measure,  since they
contain the sequence  $\{ 1+r_n\}$ (where $\{r_n\}$ is the  Rademacher sequence) which has no a.e. convergent subsequence. In
particular, we cannot deduce   existence of fixed points for these sets by using any known fixed point theorem
 for  compact in measure sets, as in \cite{DBGFJP}.
We will prove that $t(C_a)=1+a$ for every $a\in [0,1]$:

Let $\{g_n:=\lambda_n f_n+(1-\lambda_n) a\}$  be a sequence in $C_a$ which is
convergent to some $g$ a.e., where $(\lambda_n)\in [0,1]$ and $(f_n)\subset C$.
Throughout a subsequence, if necessary, we can assume that $\lambda_n$ converges to some $\lambda
\in [0,1]$, which implies that $\{f_n\}$ converges to a function $f\ge 0$ a.e. and
$g=\lambda f+ (1-\lambda)a$.
From  Fatou's Lemma we obtain  $\Vert
f\Vert\leq 1$.  We have
\begin{eqnarray*}\limsup_n \Vert g_n-g\Vert&=& \limsup_n\Vert \lambda_nf_n+(1-\lambda_n)a-\lambda
f-(1-\lambda) a\Vert\\
&=&
 \lambda (\limsup_n \Vert f_n-f\Vert )=\lambda( \limsup_n\Vert f_n\Vert -\Vert
f\Vert)\\
&=&\lambda(1-\Vert f\Vert).\end{eqnarray*}
Furthermore, if $f\not= 0$ we have that the function $$f+(1-\Vert f\Vert)a=\Vert
f\Vert \left( \frac{f}{\Vert f\Vert}\right) +(1-\Vert f\Vert)a$$
belongs to $C_a$. The same is true if $f=0$. Denote  $h=(1-\lambda)a+\lambda (f+(1-\Vert f\Vert)a)\in C_a$. We have
$$
\begin{array}{lll}
\limsup_n\V g_n-h\V&=& \limsup_n \V g_n-g\V+\V g-h\V\\
                             & = & \lambda(1-\Vert f\Vert)+  \lambda(1-\Vert f\Vert)a\\
                             & = &(1+a)\limsup_n\V g_n-g\V.
\end{array}
$$
Thus, $t(C_a)\leq 1+a$. The equality is a consequence of the fact that the sequence $f_n=\{ n\chi_{[0,1/n]}\}$ converges to 0 a.e. and $\limsup_n \Vert f_n-f\Vert\geq 1+a $ for every $f\in C_a$.
Therefore, for every $a\in [0,1]$ and $T:C_a\to C_a$ affine Lipschitzian with $S(T)<{2\over 1+a}$, the set of fixed points is nonempty.
\end{example}

Besides the function spaces $L_1(\mu)$, there are some broader classes of function Banach spaces with the uniform Opial property with respect to the almost everywhere convergence and satisfying the Koml\'os' condition. Indeed, in case of Orlicz function spaces for an Orlicz function $\Phi$ satisfying the $\Delta_2$-condition, it was proved in \cite[Theorem 3]{DJ} that the Orlicz space $X=L_\Phi(\mu)$ verifies the uniform Opial condition w.r.t. the convergence almost everywhere and
$$
1+r_{\tau}(1)\ge a\left({1\over 2}\right)
$$
where the function $a(\cdot)$ was defined by
$$
a(\delta)=\inf\left\{{\Phi^{-1}(t)\over \Phi^{-1}(\delta t)}:\ t>0\ \right\},\qquad \forall \delta>0.
$$
Thus we can conclude:

\begin{cor}
Let $\Phi$ be an Orlicz function satisfying the $\Delta_2$-condition and $X=L_\Phi(\mu)$ endowed with the Luxemburg norm. Let $C$ be a norm-closed convex bounded set  and $T:C\to C$ an affine Lipschitzian mapping with
$$
S(T)<{a(1/2)\over t(C)}.
$$
Then $T$ has a fixed point.
\end{cor}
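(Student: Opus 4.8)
The plan is to obtain this corollary as a direct specialization of Theorem \ref{main} to $X=L_\Phi(\mu)$; all that is needed is to check that this space meets the two structural hypotheses of the theorem and then to compare the two right-hand bounds. First I would record that $L_\Phi(\mu)$ satisfies the Komlós' condition. This requires no separate argument here: as recalled in the Preliminaries, every Banach function space over a $\sigma$-finite complete measure space that is weakly finitely integrable and has the Fatou property satisfies Komlós' condition, and the Orlicz spaces are explicitly listed among this class. Hence the first hypothesis of Theorem \ref{main} holds, the coefficient $t(C)$ is well defined for the given $C$, and the set $\mathcal{D}(C)$ is nonempty.

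Next I would invoke the cited result \cite[Theorem 3]{DJ}, which under the $\Delta_2$-condition provides two facts at once. On the one hand, $X=L_\Phi(\mu)$ satisfies the uniform Opial condition with respect to $\mu$-a.e. convergence, which in particular implies the non-strict Opial condition required as the second hypothesis of Theorem \ref{main}. On the other hand, the same theorem yields the quantitative lower estimate $1+r_\tau(1)\ge a(1/2)$, where $a(\delta)=\inf\{\Phi^{-1}(t)/\Phi^{-1}(\delta t):t>0\}$. Since in the present setting $\tau$ is exactly the convergence $\mu$-a.e. used to evaluate the Opial modulus in Theorem \ref{main}, we may identify $r_X(1)=r_\tau(1)$.

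With both hypotheses verified, the conclusion is purely arithmetic. Since the non-strict Opial condition forces $t(C)\in[1,2]$, and in particular $t(C)>0$, dividing the estimate $1+r_X(1)\ge a(1/2)$ by $t(C)$ gives $a(1/2)/t(C)\le (1+r_X(1))/t(C)$. Consequently, any affine Lipschitzian $T:C\to C$ with $S(T)<a(1/2)/t(C)$ satisfies a fortiori $S(T)<(1+r_X(1))/t(C)$, and Theorem \ref{main} delivers a fixed point of $T$. I do not expect any genuine obstacle: the whole argument imports the Komlós and Opial properties from the structure theory of Orlicz spaces together with the lower bound on $r_\tau(1)$ from \cite{DJ}. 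The only point worth a remark is that the hypothesis $S(T)<a(1/2)/t(C)$ is not vacuous, since monotonicity of $\Phi^{-1}$ gives $\Phi^{-1}(t/2)\le\Phi^{-1}(t)$, whence every quotient defining $a(1/2)$ is at least $1$ and therefore $a(1/2)\ge 1$.
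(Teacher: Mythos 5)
Your proposal is correct and follows essentially the same route as the paper: verify that $L_\Phi(\mu)$ satisfies the Koml\'os condition (as recalled in the Preliminaries via \cite{len}) and the Opial property w.r.t.\ $\mu$-a.e.\ convergence with $1+r_\tau(1)\ge a(1/2)$ by \cite[Theorem 3]{DJ}, then conclude by the arithmetic comparison $S(T)<a(1/2)/t(C)\le (1+r_X(1))/t(C)$ and Theorem \ref{main}. The paper treats this as an immediate consequence and gives no further argument, so there is nothing to add.
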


A similar result can be obtained for Orlicz function spaces endowed with the Orlicz  norm in case that $\Phi$ is an $N$-function \cite[Theorem 5]{DJ}.

\medskip
To finish this section, we extend our results to non-commutative $L_1$-spaces associated to a finite von Neumann algebra. Note that for every $\sigma$-finite measure space,  the Banach space $L_\infty(\mu)$ is a finite von Neumann algebra and the corresponding
$L_1(\mu)$ is  a particular example of a non-commutative (in fact, commutative) $L_1$-space. For  standard notation and some background on non-commutative $L_1$-spaces the reader can consult for instance \cite{E-Nelson, G-Pisier&Q-Xu}.

\medskip

Let $(\mathcal{M},\tau)$ be a finite von Neumann algebra, and consider $X=L_1(\mathcal{M},\tau)$ endowed with the usual norm
$$
\V x\V= \tau(|x|).
$$
The measure topology is defined via the following fundamental system of neighborhoods of zero: for every $\epsilon,\delta>0$  let
$$
N(\epsilon,\delta)=\{x\in\mathcal{M}: \exists \ p \  \hbox{ projection in $\mathcal{M}$ with } \V xp\V_\infty\le \epsilon \ {\rm and }\ \tau(p^\perp)\le \delta\}.
$$
In case that we consider $L_1([0,1])$ and the trace given by $\tau(f)=\int_0^1 f dx$, the previous topology
 coincides with the usual topology of the convergence in measure. Note that $L_1(\mathcal{M},\tau)$ is a $L$-embedded Banach space and the measure topology is an abstract measure topology in the sense of \cite{Pfi}. Thus,
 equality $(\ast)$ given in the proof of Lemma \ref{lema} can be generalized to the frame of $L_1(\mathcal{M},\tau)$ and the measure topology (see \cite{mangeles,H-Pfitzner, Pfi}) and Koml\'os' condition is extended in \cite[Proposition 3.11]{R}.
As a consequence of Theorem \ref{main} we can conclude:

\begin{cor}
Let $(\mathcal{M}, \tau ) $ be a finite von Neumann algebra and $C$ be a norm-closed convex bounded subset of $L_1(\mathcal{M},\tau)$. Every affine Lipschitzian mapping $T:C\to C$ has a fixed point whenever
$$
S(T)<{2\over t(C)}.
$$
In case that $C$ is closed in measure, as the close unit ball, $T$ has a fixed point if $S(T)<2$.
\end{cor}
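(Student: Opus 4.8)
The plan is to realize this corollary as a direct instance of Theorem \ref{main} applied to $X = L_1(\mathcal{M},\tau)$ equipped with the measure topology, the only point of real substance being the evaluation $1 + r_X(1) = 2$, which turns the general bound $(1+r_X(1))/t(C)$ into $2/t(C)$. Three hypotheses of Theorem \ref{main} must be checked: that $X$ satisfies the Koml\'os condition, that the coefficient $t(C)$ is well defined (i.e. that $C$ contains measure-convergent sequences), and that $X$ has the non-strict Opial property w.r.t. the measure topology. The first is exactly \cite[Proposition 3.11]{R}, and it simultaneously forces every norm-closed convex bounded $C$ to contain a measure-convergent sequence, so that $t(C)\in[1,2]$ is well defined. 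Thus the work reduces to the Opial property and the constant $r_X(1)$.

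Both of these I would extract from the additivity identity $(\ast)$, namely $\limsup_n\V x_n+z\V=\limsup_n\V x_n\V+\V z\V$ for every measure-null sequence $(x_n)$ and every $z\in X$, which is legitimate here because $L_1(\mathcal{M},\tau)$ is $L$-embedded and its measure topology is an abstract measure topology in Pfitzner's sense; the generalization of $(\ast)$ to this frame is supplied by \cite{mangeles,H-Pfitzner,Pfi}. In $L_1$-type spaces the difference $\V x_n+z\V-\V x_n\V$ in fact converges to $\V z\V$, so $(\ast)$ holds equally with $\liminf$. Granting this, if $(x_n)$ is measure-convergent to $x_0$ then $(x_n-x_0)$ is measure-null, and the identity with $z=x_0-x$ gives $\liminf_n\V x_n-x\V=\liminf_n\V x_n-x_0\V+\V x_0-x\V\ge\liminf_n\V x_n-x_0\V$, which is the non-strict Opial condition. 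The same additivity collapses the Opial modulus to $r_X(1)=\inf(\liminf_n\V x_n\V+\V x\V)-1$ over the admissible sequences and points; since the constraints force $\liminf_n\V x_n\V\ge 1$ and $\V x\V\ge 1$, this infimum equals $2$, whence $r_X(1)=1$ and $1+r_X(1)=2$. Theorem \ref{main} then yields a fixed point whenever $S(T)<2/t(C)$, proving the first assertion.

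For the second statement I would observe that the bound just obtained actually gives $r_X(c)\ge c>0$ for every $c>0$, so $L_1(\mathcal{M},\tau)$ enjoys the uniform Opial condition; by the characterization recalled immediately after the definition of $t(C)$, this forces $t(C)=1$ precisely when $C$ is closed in measure. As the closed unit ball is closed in measure (the norm being lower semicontinuous for the measure topology, by the Fatou-type property), the criterion $S(T)<2/t(C)$ reduces to $S(T)<2$. The main obstacle is the clean evaluation $r_X(1)=1$: everything downstream is a mechanical specialization of Theorem \ref{main}, but that value rests entirely on transporting the $L_1$ \lq\lq asymptotic orthogonality'' encoded in $(\ast)$ --- and in particular its $\liminf$ form --- to the non-commutative, $L$-embedded setting, which is exactly the structural input borrowed from \cite{mangeles,H-Pfitzner,Pfi}. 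Without it the modulus could only be bounded below crudely and the sharp constant $2$ would be lost.
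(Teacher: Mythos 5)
Your proposal is correct and follows essentially the same route as the paper: the corollary is obtained as a direct specialization of Theorem \ref{main}, with the Koml\'os condition supplied by \cite[Proposition 3.11]{R}, and the non-strict Opial property together with the evaluation $1+r_X(1)=2$ extracted from the generalization of equality $(\ast)$ to $L_1(\mathcal{M},\tau)$ via \cite{mangeles,H-Pfitzner,Pfi}. The paper leaves these verifications implicit in the paragraph preceding the corollary, and your write-up merely makes explicit the same ingredients, including the observation that measure-closed sets (such as the unit ball) have $t(C)=1$.
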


%Lorentz and Orlicz-Lorentz spaces?  I think that the have the UKK property but I do not know about the uniform Opial condition. Are they modular function spaces? In that case, I may know something.
%I need to check!!

\section{Some sharp examples}

In this section, we show that the statement of  Theorem \ref{main}  is sharp for every possible value of $t(C)$. First, we check that either the  condition $t(C)=2$  or $s(T)=2$ does not imply the existence of fixed points in $L_1([0,1])$.

\begin{example}\label{failure} Let $C$ be the subset of $L_1([0,1])$ given in Example \ref{EX1}.   For every $f\in L_1([0,1])$ we define  $T(f)(t)=2f(2t)$, where we
assume $f(t)=0$ if $t>1$. Note that $T$ is an affine isometry, $T(C)\subset C$ and supp $T^n(f)\subset [0,1/2^n]$ for every $n\in\mathbb{N}$. This implies that $f=0$ a.e., which does not belong to $C$, is the unique possible fixed point for $T$.   Consequently, $T:C\to C$ fails to have a fixed point and  $t(C)$ must be equal to 2.
\end{example}

\begin{example}

We consider the set $C_0$ introduced in  Example \ref{EX1} which verifies $t(C_0)=1$. Define $G: C_0\to C_0$ by $G=TR$, where $T$ is the mapping in Example \ref{failure} and
 $R:C_0\to C$ is given  by
 $$R(f)=(1-\Vert f\Vert)+f.$$

 Note that $\Vert
\lambda f+(1-\lambda)g\Vert =\lambda \Vert f\Vert +(1-\lambda)\Vert g\Vert$ for
every $f,g\in C_0$ which implies that  $R$ is an affine  retraction. Furthermore $G$ is fixed point free  because  $T$ is. Since $(TR)^n=T^nR$
and  $\vert R\vert \leq 2$, $S(G)\le 2$.
Finally,  Corollary \ref{cor} implies  $S(G)= 2$  due to the absence of fixed point.

% \textbf{Claim}. The retraction $R$ is $\Vert\cdot\Vert_L$-nonexpansive.
% \medskip

 %Indeed, let $f,g\in C_0$ and assume that $\Vert g\Vert \geq \Vert f\Vert$. Denote
% $$I_0=\{ t: f(t)\geq g(t)\}\,,\, J_0=\{ t: g(t)> f(t)\}.$$
 %Note that the assumption $\Vert g\Vert \geq \Vert f\Vert$ implies that
% $$\int_{I_0\cup J_0} g(t)dt\geq \int_{I_0\cup J_0} f(t)dt.$$
 %Thus,
%$$\int_{J_0} (g(t)-f(t))dt\geq \int_{I_0} (f(t)-g(t))dt$$
%and
%$$\Vert f-g\Vert_L=\int_{J_0} (g(t)-f(t))dt.$$
%Denote $a=\Vert g\Vert -\Vert f\Vert\geq 0$ ; $I=\{t: f(t)+a\geq g(t)\}$ ; $J=\{ t:
%f(t)+a<g(t)\}$. Note that $J\subset J_0$ and $g-f\geq 0$ in $J_0\setminus J$. Thus,

%$$\Vert Rf-Rg\Vert_L=\max\left \{\int_ J (g(t)-f(t)-a)dt, \int_I
%(f(t)+a-g(t))dt\right\}$$
%and we have
%$$\int_I (f(t)+a-g(t))dt \leq \int_I (f(t)-g(t))dt +a = \int_I (f(t)-g(t))dt$$
%$$+ \int_{I\cup J}  (g(t)-f(t))dt
%=\int_J (g(t)-f(t))dt\leq \int_{J_0} (g(t)-f(t))dt=\Vert f-g\Vert_L.$$
%Furthermore,
%$$\int_J (g(t)-f(t)-a)dt = \int_J (g(t)-f(t))dt-\int_J adt$$
%$$\leq \int_J (g(t)-f(t))dt\leq \int_{J_0} (g(t)-f(t))dt=\Vert f-g\Vert_L.$$
%Thus $R$ is nonexpansive for the norm $\Vert \cdot\Vert_L$. Assume that $T$ is the mapping in %Example \ref{failure}. Then, the mapping $TR:C_0\to C_0$ is
%an affine $\Vert\cdot\Vert_L$-nonexpansive mapping which is fixed point free.

\end{example}

Finally,   for every possible value of $t(C)\in (1,2)$, we next show an example  of an affine Lipschitzian  mapping,   failing to have any fixed point, and with  $S(T)={1+r_\tau(1)\over t(C)}$. That is, Theorem \ref{main} is sharp in every possible case.

\begin{example}
Take $(g_n)$ a sequence of normalized functions in $L_1([0,1])$  supported on a  pairwise disjoint sequence of subsets of $[0,1]$. For every $t\in (1,2)$ we consider the following norm-closed convex bounded  set
$$
C_t:=\left\{ s_1(t-1) g_1+\sum_{n=2}^\infty s_n g_n: \ \ s_n\ge 0, \sum_{n=1}^\infty s_n=1\ \right\}.
$$
 Define $T:C_t\to C_t$  by
$$
T\left(s_1(t-1) g_1+\sum_{n=2}^\infty s_n g_n\right)=\sum_{n=1}^\infty s_n g_{n+1}.
$$
 It is not difficult to check that $T$ is fixed point free,  affine and
$$
\V T^nf-T^ng\V\le {2\over t} \V f-g\V
$$
for every $f,g\in C_t$. For  $f=(t-1)g_1$ and $g=g_2$ we have
$$
\V T^nf-T^ng\V=\V g_{n+1}-g_{n+2}\V=2={2\over t}\V f-g\V
$$
which implies that $S(T)={2\over t}$. Now let us check that $t(C_t)=t$. Indeed, let $(f_n)$ be a sequence in $C$ which converges a.e. to some $f\in L_1([0,1])$. In particular $f=s_1(t-1)f_1+\sum_{n=2}^\infty s_n f_n$ where $\delta_f:=\sum_{n=1}^\infty s_n\le 1$ and
$$
\limsup_n\V f_n-f\V=\limsup_n\V f_n\V-\V f\V = 1-\delta_f.
$$

Let $g=f+(1-\delta_f)(t-1)f_1\in C_t$. Then
$$
\begin{array}{lll}\inf_{h\in C_t}\limsup_n\V f_n-h\V& \le& \limsup_n\V f_n-g\V\\
      &= &\limsup_n\V f_n-f\V+(1-\delta_f)(t-1)\\
  &= &t \limsup_n\V f_n-f\V,
  \end{array}
$$
which shows that $t(C_t)\le t$. Consider the sequence $(g_n)_{n\ge 2}\subset C_t$ which tends to zero a.e. It can be checked that for every $h\in C_t$, $\limsup_n \V g_n-h\V\ge t$ and this proves that $t(C_t)=t$ for every $t\in (1,2)$.

\end{example}

\end{document}